\newtheorem{thm}{Theorem}[section]
\newtheorem{lemma}[thm]{Lemma}
\newtheorem{prop}[thm]{Proposition}
\newtheorem{cor}[thm]{Corollary}
\theoremstyle{definition}
\newtheorem{defn}[thm]{Definition}
\newcommand{\ve}{\varepsilon}
\newcommand{\R}{\mathbb R}
\newcommand{\C}{\mathbb C}
\newcommand{\Hil}{\mathbb H}
\newcommand{\Z}{\mathbb Z}
\newcommand{\K}{\mc{K}}
\def\so{_{\scriptscriptstyle O}}
\def\su{_{\scriptscriptstyle U}}
\def\st{_{\scriptscriptstyle T}}
\newcommand{\crt}{^{\scriptscriptstyle {\it CRT}}}
\newcommand{\ct}{{\it CRT}}
\newcommand{\ftn}[3]{ #1 : #2 \rightarrow #3 }
\newcommand{\mc}[1]{\mathcal{#1}}
\newcommand{\multialg}[1]{\mathcal{M}(#1)}
\newcommand{\sm}[4]{       \bigl( \begin{smallmatrix} 
					{#1} & {#2} \\ {#3} & {#4}
                   		    \end{smallmatrix} \bigr)         }
\newcommand{\catr}{\textbf{C*$\R$-Alg}}
\newcommand{\catabgp}{\textbf{Ab}}
\newcommand{\catkk}{\textbf{KK}}
\begin{document}
	\title{Axiomatic $KK$-theory for Real C*-algebras}
	\author{Jeffrey L. Boersema}
	\address{Department of Mathematics \\
	Seattle University \\
	Seattle, WA 98133 USA}
	\email{boersema@seattleu.edu}
	\author{Efren Ruiz}
        \address{Department of Mathematics \\
        University of Hawaii Hilo \\
        200 W. Kawili St. \\
        Hilo, Hawaii 96766 USA}
        \email{ruize@hawaii.edu}
 
        \date{\today}
	

	\keywords{KK-theory, functor}
	\subjclass[2000]{Primary: 46L35}
	
	\begin{abstract}
We establish axiomatic characterizations of $K$-theory and $KK$-theory for real C*-algebras.  In particular, let $F$ be an abelian group-valued functor on separable real C*-algebras.  We prove that if $F$ is homotopy invariant, stable, and split exact, then $F$ factors through the category $KK$.    Also, if $F$ is homotopy invariant, stable, half exact, continuous, and satisfies an appropriate dimension axiom, then there is a natural isomorphism $K(A) \rightarrow F(A)$ for a large class of  separable real C*-algebras $A$.  Furthermore, we prove that a natural transformation $F(A) \rightarrow G(A)$ of homotopy invariant, stable, half-exact functors which is an isomorphism when $A$ is complex is necessarily an isomorphism when $A$ is real.
	\end{abstract}
	
        \maketitle

\section{Introduction}

In this paper we establish axiomatic characterizations of $K$-theory and $KK$-theory for real C*-algebras, following that established in the complex case by \cite{higson87}.  These results and techniques will be used in a forthcoming paper to prove a classification of real simple purely infinite C*-algebras along the lines of that in the complex case proven by \cite{phillips00} and \cite{kirchberg94}.

Let $F$ be an abelian group-valued functor on separable real C*-algebras.  We prove that if $F$ is homotopy invariant, stable, and split exact, then $F$ factors through the category $KK$.  Also, if $F$ is homotopy invariant, stable, half exact, continuous, and satisfies the dimension axiom, then there is a natural isomorphism $K(A) \rightarrow F(A)$ for a large class of  separable real C*-algebras $A$.  These results are proven along the same lines as the proofs in \cite{higson87}.  

If $F$ is homotopy invariant, stable, and split exact then we use $F$ to construct a \ct-module $F\crt(A)$  (see \cite{bousfield90}, \cite{boersema02}, and \cite{boersema04}) for any separable real C*-algebra.
This \ct-module structure allows us to prove that a natural transformation $F(A) \rightarrow G(A)$ of homotopy invariant, stable, half-exact functors must be an isomorphism if it is an isomorphism in the special case that $A$ is complex.

These main results are all in Section 3.  In Section 2 we re-develop the essential preliminaries concerning $KK$-theory in the real case, including the equivalence of the two principal models of $KK$-theory:  the Kasparov bimodule construction and the Fredholm picture.
        
\section{The Fredholm Picture of $KK$-Theory}

We take the following definition from Section~2.3 of \cite{schroderbook} to be the standard definition of $KK$-theory for real C*-algebras.  It is essentially the same as that in \cite{kasparov81} where it was simultaneously developed for both real and complex C*-algebras.  

\begin{defn}\label{def:kbimod}
Let $A$ be a graded separable real C*-algebra and $B$ be a real C*-algebra with a countable approximate identity.
\begin{itemize}
\item[(i)]  A Kasparov ($A$-$B$)-bimodule is a triple $( E , \phi , T )$ where $E$ is a countably generated graded Hilbert $B$-module, $\ftn{ \phi }{ A }{ \mc{L} ( E ) }$ is a graded $*$-homomorphism, and $T$ is an element of $\mc{L} ( E )$ of degree 1 such that
\begin{equation*}
( T - T^{*} ) \phi ( a ), ( T^{2} - 1 ) \phi ( a ) , \ \mathrm{and} \ [ T , \phi ( a ) ] 
\end{equation*}
lie in $\mc{K}(E)$ for all $a \in A$.

\item[(ii)] Two triples $( E_{i} , \phi_{i} , T_{i} )$ are \emph{unitarily equivalent} if there is a unitary $U$ in $\mc{L} ( E_{0} , E_{1} )$, of degree zero, intertwining the $\phi_{i}$ and $T_{i}$. 

\item[(iii)] If $(E, \phi, T)$ is a Kasparov ($A$-$B$)-bimodule and $\beta \colon B \rightarrow B'$ is a homomorphism of C*-algebras, then the pushed-forward Kasparov ($A$-$B'$)-bimodule is defined by $$\beta_*(E,\phi,T) = (E \hat{\otimes}_\beta B', \phi \hat{\otimes} 1, T \hat{\otimes} 1) \; .$$

\item[(iv)]  Two Kasparov ($A$-$B$)-bimodules $(E_{i} , \phi_{i} , T_{i} )$ for $i = 0,1$ are \emph{homotopic} if there is a Kasparov bimodule ($A$-$B { \otimes } C[0,1])$, say $(E, \phi , T )$, such that $(\ve_i)_* (E, \phi, T)$ and $(E_i, \phi_i, T_i)$ are unitarily equivalent for $i = 0,1$, where $\ve_{i}$ denotes the evaluation map. 

\item[(v)]  A triple $(E, \phi , T )$ is \emph{degenerate} if the elements \begin{equation*}
( T - T^{*} ) \phi ( a ), ( T^{2} - 1 ) \phi ( a ) , \ \mathrm{and} \ [ T , \phi ( a ) ]
\end{equation*}
are zero for all $a \in A$.  By Proposition~2.3.3 of \cite{schroderbook}, degenerate bimodules are homotopic to trivial bimodules.
    
\item[(vi)] $KK( A , B )$ is defined to be the set of homotopy equivalence classes of Kasparov ($A$-$B$)-bimodules.  
\end{itemize}
\end{defn}     

The following theorem summarizes the principal properties of $KK$-theory for real C*-algebras from Chapter~2 of \cite{schroderbook}.

\begin{prop} \label{kkproperties}
$KK(A,B)$ is an abelian group for separable $A$ and $\sigma$-unital $B$.
As a functor on separable real C*-algebras
(contravariant in the first argument and covariant in the second argument), it is homotopy invariant, stable, and has split exact sequences in both arguments.  Furthermore, there is a natural associate pairing (the intersection product)
$$\otimes_C \colon KK(A, C \otimes B) \otimes KK(C \otimes A', B') \rightarrow KK(A \otimes A', B \otimes B')  \; .$$
\end{prop}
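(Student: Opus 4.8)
The plan is to obtain the proposition by assembling the standard building blocks of Kasparov's theory, all of which are developed for real C*-algebras in Chapter~2 of \cite{schroderbook}; the point to keep in mind throughout is that the only genuinely analytic ingredients — Kasparov's technical theorem and the operator–homotopy lemmas — hold verbatim over $\R$, so no essentially new argument is needed beyond the complex case. I would organize the proof so that the formal parts (group structure, functoriality, homotopy invariance, stability, split exactness) come first and the product last.

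First I would put the abelian group structure on $KK(A,B)$ using the direct sum of Kasparov bimodules, $(E_0,\phi_0,T_0)\oplus(E_1,\phi_1,T_1)=(E_0\oplus E_1,\phi_0\oplus\phi_1,T_0\oplus T_1)$, checking that this respects unitary equivalence and homotopy and so descends to a visibly associative and commutative operation on $KK(A,B)$ for which the class of any degenerate bimodule (trivial by Definition~\ref{def:kbimod}(v)) is neutral. For inverses I would represent $-[E,\phi,T]$ by $(E^{\mathrm{op}},\phi,-T)$, where $E^{\mathrm{op}}$ is $E$ with its grading reversed, and exhibit a rotation homotopy on $E\oplus E^{\mathrm{op}}$ connecting $T\oplus(-T)$ to an operator that makes the triple degenerate. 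One should also record the set-theoretic point that $KK(A,B)$ is a set, which follows from the usual cardinality bound on countably generated Hilbert modules.

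Next I would treat the functoriality axioms. Contravariance in the first variable comes from $(E,\phi,T)\mapsto(E,\phi\circ f,T)$ for $f\colon A'\to A$, and covariance in the second from the pushforward of Definition~\ref{def:kbimod}(iii) for $\beta\colon B\to B'$; in each case well-definedness on homotopy classes and functoriality are routine. Homotopy invariance in both variables is essentially built into the definition of homotopy of Kasparov bimodules (Definition~\ref{def:kbimod}(iv)): a homotopy between $*$-homomorphisms induces, by pushforward or by precomposition, a homotopy of the associated Kasparov bimodules. Stability, $KK(A,\mc K\otimes B)\cong KK(A,B)$ and $KK(\mc K\otimes A,B)\cong KK(A,B)$, I would deduce from Morita invariance, i.e.\ by checking that tensoring with a fixed rank-one projection in $\mc K$ is inverse on $KK$ to amplification by $\mc K$. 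Split exactness in the second variable I would prove, exactly as in the complex case, by using a splitting $s\colon A/I\to A$ of $0\to I\to A\to A/I\to 0$ to construct an explicit homotopy inverse to the canonical map $KK(D,I)\to\ker\big(KK(D,A)\to KK(D,A/I)\big)$, the gluing again being effected by a rotation homotopy; split exactness in the first variable is the dual argument.

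Finally — and this is the step I expect to be the real obstacle — I would construct the intersection product $\otimes_C$. I would first dispose of the easy half: external multiplication $KK(A,B)\to KK(A\otimes A',B\otimes A')$ and $KK(A,B)\to KK(A'\otimes A,A'\otimes B)$, obtained by tensoring a bimodule with the identity bimodule on $A'$. The hard half is the ordinary Kasparov product $\otimes_D\colon KK(A,D)\otimes KK(D,B)\to KK(A,B)$: given representatives $(E_1,\phi_1,T_1)$ and $(E_2,\phi_2,T_2)$ one forms the interior tensor product $E=E_1\otimes_D E_2$, with $A$ acting on the left and $B$ on the right, and must produce an operator $T\in\mc L(E)$ that behaves like $T_1$ ``along $E_1$'' and like a $T_2$-connection ``along $E_2$'', has the required compactness properties, and yields a class independent of all choices — and this is precisely where Kasparov's technical theorem is invoked. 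Granting this (as carried out over $\R$ in \cite{schroderbook}), I would then define $x\otimes_C y$ for $x\in KK(A,C\otimes B)$ and $y\in KK(C\otimes A',B')$ as $(x\otimes 1_{A'})\otimes_{B\otimes C\otimes A'}(1_B\otimes y)$, suppressing the canonical flip identifications, and deduce associativity and naturality of $\otimes_C$ from the corresponding — lengthy but routine — properties of $\otimes_D$, proved by the standard manipulations with connections.
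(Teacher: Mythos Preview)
Your sketch is correct and outlines exactly the standard development of Kasparov's theory over $\R$, but the paper itself does not give a proof of this proposition at all: it is stated as a summary of known results, with a blanket citation to Chapter~2 of \cite{schroderbook}. So you are doing far more work than the paper does, essentially reconstructing the contents of that chapter rather than just invoking it. There is nothing wrong with your argument --- the group structure via direct sums and the rotation homotopy for inverses, the obvious functoriality and homotopy invariance, Morita/stability, the split-exactness argument, and the construction of the product via connections and Kasparov's technical theorem are all standard and all valid over $\R$ --- but for the purposes of this paper the appropriate ``proof'' is a one-line reference to \cite{schroderbook} (and, if you like, \cite{kasparov81}).
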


We now turn to the Fredholm picture of $KK$-theory, which was developed in \cite{higson87} with only the situation of complex C*-algebras in mind.  However, the approach goes through the same for real C*-algebras, as follows.

\begin{defn}
Let $A$ and $B$ be real separable C*-algebras.
\begin{itemize}
\item[(i)]    A \emph{$KK ( A, B)$}-cycle is a triple $( \phi_{+} , \phi_{-} , U )$, where 
$\ftn{ \phi_{ \pm } }{ A }{ \mc{M}({ \mc{K} \otimes B }) }$ are $*$-homomorphisms, and $U$ is an element of $\mc{M}({ \mc{K} \otimes B })$ such that
\begin{equation*}
U \phi_{+} ( a ) - \phi_{-} ( a ) U, \ \phi_{+} ( a ) ( U^{*} U - 1 ), \ \mathrm{and} \ \phi_{-} ( a ) ( U U^{*} - 1 ) 
\end{equation*}
lie in $\mc{K} \otimes B$ for all $a \in A$.  

\item[(ii)]  Two $KK ( A , B )$-cycles $( \phi_{+}^{i} , \phi_{-}^{i} , U^{i} )$ are \emph{homotopic} if there is a $KK ( A , B \otimes C[ 0, 1 ]  )$-cycle $(\phi_{+} , \phi_{-} , U )$ such that $( \varepsilon_{i} \phi_{+}, \varepsilon_{i} \phi_{-} , \varepsilon_{i} ( U ) ) = ( \phi_{+}^{i} , \phi_{-}^{i} , U^{i} )$, where 
$\ftn{ \varepsilon_{i} } { \mc{M}( \mc{K} \otimes B \otimes C [ 0 , 1] ) }{ \mc{M}( \mc{K} \otimes B ) }$ is induced by evaluation at $i$.

\item[(iii)]  A $KK ( A , B )$-cycle $(\psi_{+} , \psi_{-} , V )$ is \emph{degenerate} if the elements 
\begin{equation*}
V \psi_{+} ( a ) - \psi_{-} ( a ) V, \ \psi_{+} ( a ) ( V^{*} V - 1 ), \ \mathrm{and} \ \psi_{-} ( a ) ( V V^{*} - 1 ) 
\end{equation*}
are zero for all $a \in A$.     

\item[(iv)]  The sum $( \phi_{+} , \phi_{-} , U ) \oplus ( \psi_{+} , \psi_{-} ,V )$ of two $KK ( A , B )$-cycles is the $KK ( A , B )$-cycle 
\begin{equation*}
\left( \left( 
\begin{matrix}
\phi_{+} & 0 \\
0 & \psi_{+}
\end{matrix}
\right), 
\left( 
\begin{matrix}
\phi_{-}  & 0 \\
0 & \psi_{-}  
\end{matrix}
\right),
\left( 
\begin{matrix}
U & 0 \\
0 & V
\end{matrix}
\right)
\right)
\end{equation*}
where the algebra $M_2 ( \mc{M}( \mc{K} \otimes B ) )$ is identified with $\mc{M}( \mc{K} \otimes B )$ by means of some isomorphism $M_2  ( \mc{K} ) \cong \mc{K}$, which is unique up to homotopy by Section~1.17 of \cite{kasparov81}.

\item[(v)]  Two cycles $( \phi_{+}^{i}, \phi_{-}^{i} , U^{i} )$ are said to be \emph{equivalent} if there exist degenerate cycles $( \psi_{+}^{i} , \psi_{-}^{i} , V^{i} )$ such that 
\begin{equation*}
( \phi_{+}^{0} , \phi_{-}^{0} , U^{0} ) \oplus ( \psi_{+}^{0}, \psi_{-}^{0}, V^{0} ) \quad \mathrm{and} \quad ( \phi_{+}^{1} , \phi_{-}^{1} , U^{1} ) \oplus ( \psi_{+}^{1}, \psi_{-}^{1}, V^{1} )
\end{equation*}  
are homotopic.  

\item[(vi)] $\textbf{KK} ( A , B )$ is defined to be the set of equivalence classes of $KK ( A , B )$-cycles.

\end{itemize}
\end{defn}

The following lemma is the real version of Lemma~2.3 of \cite{higson87}.

\begin{lemma}  \label{lem:fhgrp}
$\textbf{KK}( A , B )$ is an abelian group, for separable real C*-algebras $A$ and $B$.  As a functor it is contravariant in the first argument and covariant in the second argument.
\end{lemma}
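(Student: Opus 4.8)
\noindent\textit{Proof proposal.}
The plan is to mirror the proof of Lemma~2.3 of \cite{higson87}, checking at each step that the passage to real scalars is harmless --- which it is, since \cite{kasparov81} develops the machinery needed here (stability of $\mc K$, the isomorphism $M_2(\mc K)\cong\mc K$, multiplier algebras of stable algebras) for real and complex C*-algebras alike.  First I would dispose of the set-theoretic point: since $A$ and $B$ are separable, $\mc{M}(\mc K\otimes B)$ has bounded cardinality, so there is only a set of $KK(A,B)$-cycles up to isomorphism and $\textbf{KK}(A,B)$ is genuinely a set.  Then I would verify that the relation of Definition~(v) is an equivalence relation --- reflexivity and symmetry are immediate, and transitivity follows by concatenating homotopies over $C[0,1]$ (after reparametrization) together with the observation that forming $\oplus$ with a fixed degenerate cycle carries homotopies to homotopies --- and that $\oplus$ descends to a well-defined operation on $\textbf{KK}(A,B)$: direct sums of homotopies are homotopies, direct sums of degenerate cycles are degenerate, and the identification $M_2(\mc{M}(\mc K\otimes B))\cong\mc{M}(\mc K\otimes B)$ is independent, up to homotopy, of the chosen isomorphism $M_2(\mc K)\cong\mc K$ by Section~1.17 of \cite{kasparov81}.

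Next I would check the group axioms that do not involve the real structure in any essential way.  Commutativity: conjugating $x\oplus y$ by the degree-zero flip of $M_2$ produces $y\oplus x$, and conjugation by a degree-zero unitary of $\mc{M}(\mc K\otimes B)$ does not change the equivalence class --- this follows from the Whitehead trick ($\mathrm{diag}(W,W^{-1})$ is connected to $1$ in the unitary group) together with the stability of $\mc K\otimes B$, exactly as in the complex case.  Associativity follows from the coherence up to homotopy of the two identifications of $M_3(\mc K)$ with $\mc K$.  The class of any degenerate cycle, e.g.\ $(0,0,0)$, is a neutral element, since adjoining a degenerate cycle does not change the class of a cycle --- precisely what Definition~(v) guarantees.

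The step with genuine content is the existence of inverses, where I claim $-[(\phi_+,\phi_-,U)]=[(\phi_-,\phi_+,U^*)]$.  I would realize the sum $(\phi_+,\phi_-,U)\oplus(\phi_-,\phi_+,U^*)$ on $M_2$, with homomorphisms $\mathrm{diag}(\phi_+,\phi_-)$ and $\mathrm{diag}(\phi_-,\phi_+)$ and unitary part $\mathrm{diag}(U,U^*)$, and connect $\mathrm{diag}(U,U^*)$ to the constant flip $\sm{0}{-1}{1}{0}$ through
\[
V_t=\sm{\cos t\;U}{-\sin t}{\sin t}{\cos t\;U^*},\qquad t\in[0,\tfrac{\pi}{2}]
\]
(rescaling $[0,\tfrac{\pi}{2}]$ to $[0,1]$).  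A direct computation shows that $\bigl(\mathrm{diag}(\phi_+,\phi_-),\ \mathrm{diag}(\phi_-,\phi_+),\ (V_t)_t\bigr)$ is a $KK(A,B\otimes C[0,1])$-cycle: the three error terms are $\cos^2 t$ (resp.\ $\cos t$) times block-diagonal matrices whose entries are error terms of the original cycle, hence lie in $\mc K\otimes B\otimes C[0,1]$.  At $t=0$ this is $(\phi_+,\phi_-,U)\oplus(\phi_-,\phi_+,U^*)$; at $t=\tfrac{\pi}{2}$ all three error terms vanish identically, so that endpoint is degenerate.  This exhibits the sum as homotopic to a degenerate cycle, i.e.\ as $0$.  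The point to watch in the real case is exactly that only real scalars occur in $V_t$ and in the flip, so the rotation argument is insensitive to the ground field.  Finally, functoriality is routine: a $*$-homomorphism $A'\to A$ acts by precomposing the $\phi_\pm$, and a $*$-homomorphism $B\to B'$ acts by pushing a cycle forward along the induced maps $\mc K\otimes B\to\mc K\otimes B'$ and $\mc{M}(\mc K\otimes B)\to\mc{M}(\mc K\otimes B')$, as in \cite{higson87}; both respect $\oplus$, homotopy, and degeneracy, and hence descend.  The main obstacle is bookkeeping only: confirming that each homotopy written down (the flip conjugation, the Whitehead rotation, and the inversion rotation $V_t$) remains a legitimate cycle valued in the correct multiplier algebra over the whole interval, and that the identifications $M_n(\mc K)\cong\mc K$ are invoked coherently --- none of which is harder than in the complex case.
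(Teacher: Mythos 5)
Your proposal is correct and follows the same route as the paper: the paper's proof consists precisely of exhibiting $(\phi_-,\phi_+,U^*)$ as the inverse via the rotation homotopy $W_t=\sm{\cos t\,U}{-\sin t}{\sin t}{\cos t\,U^*}$ on $[0,\pi/2]$ (identical to your $V_t$) and deferring the remaining verifications, including functoriality, to Sections~2.4--2.7 of \cite{higson87}. Your additional explicit checks of the error terms and of the group axioms are correct elaborations of what the paper leaves to the reference.
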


\begin{proof}
For the first statement, we show that a cycle $(\phi_+, \phi_-, U)$ has inverse $(\phi_-, \phi_+, U^*)$.  Indeed,
the sum
\begin{equation*}
\left( 
\left(
\begin{matrix}
\phi_{+} & 0 \\
0 &  \phi_{-} 
\end{matrix}
\right),
\left( 
\begin{matrix}
\phi_{-} & 0 \\
0 &  \phi_{+} 
\end{matrix}
\right),
\left( 
\begin{matrix}
U & 0 \\
0 & U^*
\end{matrix}
\right)
\right)
\end{equation*} 
is homotopic to a degenerate cycle via the operator homotopy
\begin{equation*}
W_{t} = 
\left(
\begin{matrix}
\cos ( t ) U & - \sin(t)  \\
\sin ( t )  & \cos ( t ) U^* 
\end{matrix}
\right), \quad t \in \left[ 0 , \frac{ \pi }{ 2 } \right] \; .
\end{equation*}

The functoriality is established as in \cite{higson87} in Sections~2.4 through 2.7.
\end{proof}

The next proposition establishes the isomorphism between the two pictures of $KK$-theory.  First we review some preliminaries regarding graded C*-algebras and Hilbert modules.

If $B$ is a real C*-algebra, then the \emph{standard even grading} on $M_2 ( B )$ is obtained by setting $M_2 ( B )^{(0)}$ to be the set of diagonal matrices and $M_2( B )^{(1)}$ the set of matrices with zero diagonal.  The \emph{standard even grading} on $\K \otimes B$ is obtained by choosing an isomorphism $\K \otimes B  \cong M_2 ( \K \otimes B )$.  This in turn induces a canonical (modulo a unitary automorphism) grading on $\multialg{ \K \otimes B }$.

Let $\Hil_B$ be the Hilbert $B$-module consisting of all sequences $\{b_n\}_{n = 1}^\infty$ in $B$ such that $\sum_{n = 1}^\infty b_n^* b_n$ converges.  Giving $B$ the trivial grading (that is $B^{(0)} = B$ and $B^{(1)} = \{0\}$), let $\hat{\Hil}_B = \Hil_B \oplus \Hil_B$ be the graded Hilbert $B$-module with $\hat{\Hil}_B^{(0)} = \Hil_B \oplus 0$ and $\hat{\Hil}_B^{(1)} = 0 \oplus \Hil_B$.  Then the induced grading on $\mathcal{L}(\hat{ \Hil}_B)$ is identical with the standard even grading of $M_2 ( \multialg{ \K \otimes B } )$.  Under the isomorphism   $M_2 ( \multialg{ \K \otimes B } ) \cong  \multialg{ \K \otimes B } $, this grading coincides with the one described in the previous paragraph.

\begin{thm}\label{prop:eqdef}
Let $A$ and $B$ be a real separable C*-algebras.  Then $KK ( A , B )$ is isomorphic to $\textbf{KK} ( A , B )$.
\end{thm}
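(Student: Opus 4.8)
The plan is to exhibit mutually inverse maps between the two pictures, $\Psi\colon\textbf{KK}(A,B)\to KK(A,B)$ and $\Phi\colon KK(A,B)\to\textbf{KK}(A,B)$.

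To define $\Psi$, send a $KK(A,B)$-cycle $(\phi_{+},\phi_{-},U)$ to the triple $(\hat{\Hil}_{B},\phi,T)$, where $\phi=\mathrm{diag}(\phi_{+},\phi_{-})\colon A\to\mathcal{L}(\hat{\Hil}_{B})\cong M_2(\multialg{\K\otimes B})$ has degree $0$ and $T=\sm{0}{U^{*}}{U}{0}$ has degree $1$, using the identifications set up in the paragraph preceding the theorem. Since $\K(\hat{\Hil}_{B})\cong M_2(\K\otimes B)\cong\K\otimes B$, a direct computation gives $T-T^{*}=0$, while the entries of $(T^{2}-1)\phi(a)$ and of $[T,\phi(a)]$ are, up to signs, $(U^{*}U-1)\phi_{+}(a)$, $(UU^{*}-1)\phi_{-}(a)$, $U\phi_{+}(a)-\phi_{-}(a)U$ and $U^{*}\phi_{-}(a)-\phi_{+}(a)U^{*}$; all lie in $\K\otimes B$ by the cycle conditions, the last two obtained from the first two by applying those conditions to $a^{*}$ and taking adjoints. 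Hence $(\hat{\Hil}_{B},\phi,T)$ is a Kasparov $(A$-$B)$-bimodule. One then checks that this assignment respects the relevant structure: a degenerate cycle maps to a degenerate bimodule, hence to $0$ by Definition~\ref{def:kbimod}(v); direct sums of cycles map to direct sums of bimodules under the fixed isomorphism $M_2(\K)\cong\K$; and a homotopy of cycles is a $KK(A,B\otimes C[0,1])$-cycle which maps to a Kasparov $(A$-$B\otimes C[0,1])$-bimodule whose pushforwards under the evaluation maps are the images of the two endpoint cycles. Thus $\Psi$ is a well-defined group homomorphism.

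To define $\Phi$, start from a Kasparov $(A$-$B)$-bimodule $(E,\phi,T)$. By the graded Kasparov stabilization theorem for real Hilbert $B$-modules, $E\oplus\hat{\Hil}_{B}\cong\hat{\Hil}_{B}$; adding the degenerate bimodule $(\hat{\Hil}_{B},0,\sm{0}{1}{1}{0})$ (degenerate because $\phi=0$) does not change the class, so we may assume $E=\hat{\Hil}_{B}$ and $\mathcal{L}(E)=M_2(\multialg{\K\otimes B})$ with its standard even grading. Then $\phi$, of degree $0$, is $\mathrm{diag}(\phi_{+},\phi_{-})$ with $\phi_{\pm}\colon A\to\multialg{\K\otimes B}$, and $T$, of degree $1$, is off-diagonal. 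Replacing $T$ by $\tfrac12(T+T^{*})$ along the linear homotopy $T_{s}=T-\tfrac{s}{2}(T-T^{*})$ — a Kasparov bimodule throughout, since $(T-T^{*})\phi(a)\in\K(E)$ — we may further assume $T=\sm{0}{U^{*}}{U}{0}$ for some $U\in\multialg{\K\otimes B}$, and the Kasparov conditions on $T$ now read off as precisely the $KK(A,B)$-cycle conditions on $(\phi_{+},\phi_{-},U)$. Set $\Phi[(E,\phi,T)]=[(\phi_{+},\phi_{-},U)]$.

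It remains to see that $\Phi$ is independent of the choices made — the stabilization isomorphism is unique up to homotopy and the symmetrization $T\mapsto\tfrac12(T+T^{*})$ is canonical — and that it descends to homotopy classes; for the latter one carries out the same reductions over the parameter interval, so that a homotopy of Kasparov bimodules yields, after stabilizing and symmetrizing compatibly in the parameter, a homotopy of $KK(A,B)$-cycles modulo addition of degenerates. Granting this, $\Psi$ and $\Phi$ are by construction mutually inverse. The main obstacle is exactly this last point: verifying that stabilization and symmetrization can be performed continuously in a parameter, and hence that $\Phi$ — equivalently, the injectivity of $\Psi$ — is well defined on equivalence classes. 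This is the real analogue of the argument carried out in Sections~2.4--2.7 of \cite{higson87}, and the only substantive inputs particular to the real setting, namely the real graded Kasparov stabilization theorem and the fact that degenerate bimodules are trivial, are available in \cite{schroderbook}.
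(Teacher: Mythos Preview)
Your proposal is correct and follows essentially the same route as the paper: both define the map $\textbf{KK}(A,B)\to KK(A,B)$ by $(\phi_+,\phi_-,U)\mapsto(\hat{\Hil}_B,\mathrm{diag}(\phi_+,\phi_-),\sm{0}{U^*}{U}{0})$, and both invert it by invoking the stabilization and self-adjoint reductions (Proposition~2.3.5 of \cite{schroderbook}) --- your map $\Phi$ is exactly the paper's surjectivity argument, and your well-definedness check for $\Phi$ is its injectivity argument. The only cosmetic difference is that the paper handles the parametrized reduction by applying Proposition~2.3.5 directly to the homotopy viewed as a single Kasparov $(A,B\otimes C[0,1])$-bimodule, which is a slightly cleaner way to phrase what you describe as ``stabilizing and symmetrizing compatibly in the parameter.''
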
        

\begin{proof}
We give $A$ and $B$ the trivial grading and $\multialg{ \K \otimes B }$ is given the standard even grading described above.

For a $KK ( A , B )$-cycle $x = ( \phi_{+} , \phi_{-} , U )$ we define 
\begin{equation*}
\alpha ( x ) = \left( \hat{ \Hil }_{ B } , \left( \begin{matrix} \phi_{+} & 0 \\ 0 & \phi_{-} \end{matrix} \right)  ,  \left( \begin{matrix} 0 & U^{*} \\  U & 0 \end{matrix} \right) \right) \;.
\end{equation*}
It is readily verified that $\alpha(x)$ is a Kasparov ($A$-$B$) bimodule.  Furthermore, for $x = ( \phi_{+} , \phi_{-} , U )$ and $y =  ( \psi_{+} , \psi_{-} , V )$ it is easy to see that
$\alpha(x+y)$ and $\alpha(x) + \alpha(y)$ are unitarily equivalent via a degree 0 unitary.

We must show that $\alpha$ induces a well-defined homomoprhism 
$$\overline{\alpha} \colon  \textbf{KK} ( A , B ) \rightarrow KK(A, B) \; .$$
Note first that $\alpha$ sends degenerate elements to degenerate elements.  Next, suppose $( \phi_{+} , \phi_{-} , U )$ is a $KK ( A , B \otimes C[ 0 , 1 ] )$-cycle implementing a homotopy between $x = ( \phi_{+}^{0} , \phi_{-}^{0} , U_{0} )$ and $y = ( \phi_{+}^{1} , \phi_{-}^{1} , U_{1} )$.  That is, $\varepsilon_{i} ( \phi_{+} ) = \phi_{+}^{i}$, $\varepsilon_{i} ( \phi ) = \phi_{-}^{i}$, and $\varepsilon_{i} ( U ) = U_{i}$;  where $\ftn{ \varepsilon_{t} }{ \multialg{ \K \otimes B \otimes C[0,1] } }{ \multialg{ \K \otimes B } }$ is the map induced by the evaluation map $\varepsilon_{t}$ at $t$.
Consider the Kasparov  ($A$-$B \otimes C[ 0 , 1 ]$)-bimodule
\begin{equation*}
z = \left( \hat{ \Hil }_{ B \otimes C[0,1]  } , \left( \begin{matrix}  \phi_{+} & 0 \\ 0 & \phi_{-} \end{matrix} \right) , \left( \begin{matrix} 0 & U^{*} \\ U & 0 \end{matrix} \right) \right).
\end{equation*}
Since $ 
 \hat{ \Hil }_{ B \otimes C[0,1] } \hat{ \otimes }_{ \epsilon_{i} } B \cong \hat{ \Hil }_{ B }$ it follows that
\begin{equation*}
(\varepsilon_0)_*(z) = \left( \hat{ \Hil }_{ B } , \left( \begin{matrix}  \phi_{+}^{0} & 0 \\ 0 & \phi_{-}^{0} \end{matrix} \right) , \left( \begin{matrix} 0 & U_{0}^{*} \\ U_{0} & 0 \end{matrix} \right) \right)
\end{equation*}
and
\begin{equation*}
(\varepsilon_1)_*(z) =\left( \hat{ \Hil }_{ B } , \left( \begin{matrix}  \phi_{+}^{1} & 0 \\ 0 & \phi_{-}^{1} \end{matrix} \right) , \left( \begin{matrix} 0 & U_{1}^{*} \\ U_{1} & 0 \end{matrix} \right) \right)
\end{equation*}
Therefore $\alpha(x)$ and $\alpha(y)$ are homotopic.  Hence $\overline{\alpha}$ is well-defined.  

To show that $\overline{ \alpha }$ is surjective, let $y = (E , \phi , T )$ be a Kasparov ($A$,$B$)-bimodule.  By Proposition~2.3.5 of \cite{schroderbook}, we may assume that $E \cong \hat{ \Hil}_B$ and that $T = T^*$.  Thus, with respect to the graded isomorphism $\mathcal{L}({\hat{ \Hil}_B}) \cong M_2(\multialg{ \otimes B})$,
we can write
$$
\phi = \left( \begin{matrix}  \phi_{+} & 0 \\ 0 & \phi_{-} \end{matrix} \right) 
\qquad \text{and} \qquad
T  = \left( \begin{matrix} 0 & U^{*} \\ U& 0 \end{matrix} \right)
$$
where $\phi_+$ and $\phi_-$ are homomorphisms from $A$ to $\multialg{ \K \otimes B}$ and $U$ is an element of $\multialg{ \K \otimes B}$.  Then $y = \overline{\alpha}(x)$ where $x = (\phi_+, \phi_-, U)$.

Finally, we show that $\overline{\alpha}$ is injective.  Suppose that $\overline{\alpha}(x) = \overline{\alpha}(y)$ where $x = (\phi_+, \phi_-, U)$ and $y = (\psi_+, \psi_-, V)$.  Then there is a Kasparov $(A,B \otimes C[ 0 , 1 ]  )$-bimodule $z = (E, \phi, T)$ such that 
$$
(\ve_0)_*(z) \cong  \left( \hat{\Hil}_B, \sm{\phi_+ }{0}{0}{\phi_-}, \sm{0}{U^*}{U}{0} \right) $$
and
$$
(\ve_1)_*(z) \cong  \left( \hat{\Hil}_B, \sm{\psi_+ }{0}{0}{\psi_-}, \sm{0}{V^*}{V}{0} \right)  \; .
$$

As above, we may assume that $E = \hat{\Hil}_{B \otimes C[0,1]}$ and that $T = T^*$.  Then $z$ has the form
$$z = \left( \hat{\Hil}_{B \otimes C[0,1]}, \sm{\theta_+}{0}{0}{\theta_-}, \sm{0}{W^*}{W}{0} \right) $$
where $\theta_+$ and $\theta_-$ are homomorphisms to $\multialg{ \K \otimes B \otimes C[0,1]}$ and $W$ is an element of $\multialg{ \K \otimes B \otimes C[0,1]}$.  Then $(\theta_+, \theta_-, W)$ is a Kasparov $(A, B \otimes C[0,1])$-bimodule implementing a homotopy between $x$ and $y$.

\end{proof}
        
\section{The Universal Property of $KK$-Theory } 

Let $F$ be a functor from the category {\catr}  of separable real C*-algebras to the cateogry {\catabgp} of abelian groups.  We say that $F$ is
\begin{enumerate}
\item[(i)] {\it homotopy invariant} if $(\alpha_1)_* = (\alpha_2)_*$ whenever $\alpha_1$ and $\alpha_2$ are homotopic homomorphisms on the level of real C*-algebras.
\item[(ii)] {\it stable} if $e_* \colon F(A) \rightarrow F(\mathcal{K} \otimes A)$ is an isomorphism for the inclusion $e \colon A \hookrightarrow \mathcal{K} \otimes A$ defined via any rank one projection.
\item[(iii)] {\it split exact} if any split exact sequence of separable C*-algebras
$$0 \rightarrow A \rightarrow B \rightarrow C \rightarrow 0$$
induces a split exact sequence 
$$0 \rightarrow F(A) \rightarrow F(B) \rightarrow F(C) \rightarrow 0 \; .$$
\item[(iv)] {\it half exact} if any short exact sequence of separable C*-algebras
$$0 \rightarrow A \rightarrow B \rightarrow C \rightarrow 0$$
induces an exact sequence 
$$F(A) \rightarrow F(B) \rightarrow F(C) \; .$$
\end{enumerate}

In what follows we will see that if $F$ is homotopy invariant and half exact, then it is split exact.

The following theorem is the version for real C*-algebras of Theorem~3.7 of \cite{higson87} and Theorem~22.3.1 of \cite{blackadarbook}.

\begin{thm} \label{thm:univprop1}
Let $F$ be a functor from {\catr} to {\catabgp} that is homotopy invariant, stable, and split exact.  Then there is a unique natural pairing $\alpha \colon F(A) \otimes KK(A,B) \rightarrow F(B)$ such that $\alpha(x \otimes 1_A) = x$ for all $x \in F(A)$ and where $1_A \in KK(A,A)$ is the class represented by the identity homomorphism.

Furthermore, the pairing respects the intersection product on $KK$-theory in the sense that
$$\alpha(\alpha(x \otimes y) \otimes z) = \alpha(x \otimes ( y \otimes_B z)) \colon F(A) \otimes KK(A,B) \otimes KK(B,C) \rightarrow F(C) \; .$$
\end{thm}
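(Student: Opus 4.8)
The plan is to follow the standard Higson/Cuntz strategy, adapted to the real setting, which runs through the Fredholm picture established in Theorem~\ref{prop:eqdef}. First I would reduce the construction of the pairing to the case $B = \mathcal{K} \otimes A'$ for suitable $A'$, using stability of $F$ to identify $F(\mathcal{K} \otimes A')$ with $F(A')$. Given a class $u \in KK(A,B)$, represent it by a Fredholm cycle $(\phi_+, \phi_-, U)$ with $\phi_\pm \colon A \to \multialg{\K \otimes B}$. The key device is the \emph{Cuntz algebra} (or \emph{generalized Toeplitz}) construction: from such a cycle one builds a C*-algebra $\mathcal{E}$ sitting in an extension $0 \to \K \otimes B \to \mathcal{E} \to A \to 0$ together with a pair of splittings of the quotient map, one of which makes the extension split. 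Applying $F$ and using split exactness, one extracts a homomorphism $F(A) \to F(\K \otimes B) \cong F(B)$; this is $\alpha(- \otimes u)$. The bookkeeping that this depends only on the homotopy class of the cycle, and is additive, is where homotopy invariance and stability of $F$ do their work; it is formally identical to \cite[\S3]{higson87} once the real Fredholm picture is in hand.

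Next I would verify the normalization $\alpha(x \otimes 1_A) = x$. The class $1_A \in KK(A,A)$ corresponds, under the isomorphism of Theorem~\ref{prop:eqdef}, to the Fredholm cycle whose associated extension is the trivial (direct sum) one, so the induced map $F(A) \to F(A)$ is the identity; this is a direct unwinding of definitions. For uniqueness, the point is that any natural pairing satisfying the normalization is forced: naturality with respect to the two splittings of the Cuntz extension, together with split exactness, pins down $\alpha(x \otimes u)$ in terms of $x$ and the extension data, so two such pairings must agree on every cycle, hence on every $KK$-class. Here I would invoke the observation recorded just before the theorem — that a homotopy invariant, half exact (in particular split exact) functor behaves well on the mapping-cone / suspension extensions — to handle the manipulations cleanly.

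For the compatibility with the intersection product, the strategy is to reduce to the case where $y$ and $z$ are represented by Fredholm cycles and to check that the extension attached to $y \otimes_B z$ is, up to the moves that $F$ does not see (homotopy, stabilization, addition of degenerate cycles), the Baer sum / composition of the extensions attached to $y$ and to $z$. Concretely, the intersection product of Theorem~\ref{kkproperties} is associative, and the Cuntz picture turns composition of $KK$-classes into a Yoneda-type splicing of extensions; applying $F$ and using split exactness turns splicing of extensions into composition of the induced group homomorphisms, which is exactly the asserted identity $\alpha(\alpha(x \otimes y) \otimes z) = \alpha(x \otimes (y \otimes_B z))$.

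The main obstacle I anticipate is not any single hard estimate but the coherence check that the Cuntz-algebra construction is genuinely well-defined on Fredholm \emph{homotopy} classes in the real setting: one must confirm that the isomorphisms $M_2(\K) \cong \K$ and the grading identifications used in Theorem~\ref{prop:eqdef} are compatible (up to homotopy, hence invisibly to $F$) with the manipulations of splittings, and that no real-structure subtlety — e.g. in the choice of rank-one projection defining stability, or in the passage to $\multialg{\K \otimes B}$ — obstructs the arguments of \cite{higson87}. Since Theorem~\ref{prop:eqdef} and Lemma~\ref{lem:fhgrp} already supply the real Fredholm picture and its group structure, and Proposition~\ref{kkproperties} supplies the intersection product with its associativity, I expect this coherence to go through verbatim as in the complex case, so the proof can largely cite \cite{higson87} and \cite{blackadarbook} for the formal parts while flagging the points where the real Fredholm picture developed in Section~2 is what makes the citation legitimate.
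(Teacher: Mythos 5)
Your construction of the pairing is the same one the paper uses: both run through the Fredholm picture of Theorem~\ref{prop:eqdef} and then Higson's double-split extension (his Definitions~3.3--3.4), extracting $\Phi_*\colon F(A)\to F(\K\otimes B)\cong F(B)$ from the difference of the two splittings via split exactness and stability, with naturality, normalization and uniqueness imported from Higson's Theorems~3.5 and 3.7. Two remarks. First, a small but necessary step you gloss over: the ``pair of splittings'' of $0\to\K\otimes B\to\mathcal{E}\to A\to 0$ only exists after the cycle has been normalized to the form $(\phi_+,\phi_-,1)$, so that $\phi_+(a)-\phi_-(a)\in\K\otimes B$; for a general $U$ the second map $a\mapsto\phi_-(a)$ does not split the same extension. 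This normalization is Higson's Lemma~3.6 and the paper invokes it explicitly; you should too. Second, and more substantively, your route to the compatibility $\alpha(\alpha(x\otimes y)\otimes z)=\alpha(x\otimes(y\otimes_B z))$ is genuinely different from the paper's: you propose to verify directly that the extension attached to $y\otimes_B z$ is, up to moves invisible to $F$, the splicing of the extensions attached to $y$ and $z$. That is essentially Cuntz's theorem identifying the Kasparov product with composition of quasi-homomorphisms, a nontrivial computation you would have to carry out (or carefully cite) in the real setting. The paper gets this statement for free: both sides of the identity are natural pairings satisfying the normalization $\alpha(x\otimes 1_A)=x$, so the uniqueness clause already proved forces them to coincide. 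Your approach is workable but much heavier than needed; given that you prove uniqueness anyway, you should use it here.
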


\begin{proof}
Let $\Phi \in KK(A,B)$.  Using Theorem~\ref{prop:eqdef} we represent $\Phi$ with a $KK(A,B)$ cycle and as in Lemma~3.6 of \cite{higson87}, we may assume that this cycle has the form $(\phi_+, \phi_-, 1)$.  We use the same construction as in Definitions~3.3 and 3.4 in \cite{higson87}.  In that setting $F$ is assumed to be a functor from separable complex C*-algebras, but it goes through the same for functors from separable real C*-algebras to any abelian category.  This construction produces a homomorphism $\Phi_* \colon F(A) \rightarrow F(B)$ and we then define $\alpha(x \otimes \Phi) = \Phi_*(x)$.  The proof of Theorems~3.7 and 3.5 of \cite{higson87} carry over in the real case to show that $\alpha$ is natural, is well-defined, satisfies $\alpha(x \otimes 1_A) = x$, and is unique.

That $\alpha$ respects the Kasparov product follows from the uniqueness statement.
\end{proof}

For any real C*-algebra $A$ we define $SA = C_0(\R, A)$ and $S^{-1}A = \{ f \in C_0(\R, \C \otimes A) \mid f(-x) = \overline{f(x)} \}$.  For any functor $F$ on {\catr} and any integer $n$, we define $F_n(A) = F(S^n(A))$ where
$$S^n(A) = \begin{cases} S^n(A) & n > 0 \\
					A & n = 0 \\
					(S^{-1})^{-n} & n < 0 \; .
			\end{cases}
$$

\begin{cor} \label{period}
Let $F$ be a functor from {\catr} to {\catabgp} that is homotopy invariant, stable, and split exact.  Then $F_*(A)$ has the structure of a graded module over the ring $K_*(\R)$.  In particular, $F(S^8 A) \cong F(A)$ and $F(S^{-1} SA) \cong F(A)$.
\end{cor}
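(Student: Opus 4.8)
The plan is to derive everything from the universal pairing $\alpha\colon F(C)\otimes KK(C,D)\to F(D)$ produced by Theorem~\ref{thm:univprop1}. The first thing I would record is that $\alpha$ carries $KK$-equivalences to isomorphisms: if $u\in KK(C,D)$ is invertible with inverse $v\in KK(D,C)$, then, by the product-compatibility and the normalization $\alpha(x\otimes 1)=x$ of Theorem~\ref{thm:univprop1},
\[
\alpha(-\otimes v)\circ\alpha(-\otimes u)=\alpha\big(-\otimes(u\otimes_D v)\big)=\alpha(-\otimes 1_C)=\mathrm{id}_{F(C)},
\]
and symmetrically, so $\alpha(-\otimes u)\colon F(C)\to F(D)$ is an isomorphism. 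This single observation, applied to suitable $u$, will give the periodicity statements.

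For the module structure, let $K_*(\R)=\bigoplus_{n\in\Z}K_n(\R)$ with $K_n(\R)=KK(\R,S^n\R)$; this is the graded ring $KO_*(\mathrm{pt})$, its multiplication being the intersection product of Proposition~\ref{kkproperties} together with fixed compatible identifications $S^m\R\otimes S^n\R\cong S^{m+n}\R$ (honest C*-algebra isomorphisms when $m,n\ge 0$, and $KK$-equivalences in general, using the $(1,1)$-periodicity $SS^{-1}\R\sim_{KK_\R}\R$). For $x\in F(S^mA)=F_m(A)$ and $y\in K_n(\R)$ I would set
\[
y\cdot x\;:=\;\alpha\big(x\otimes(1_{S^mA}\otimes_\R y)\big)\ \in\ F(S^mA\otimes S^n\R)=F(S^{m+n}A)=F_{m+n}(A),
\]
where $1_{S^mA}\otimes_\R y\in KK(S^mA,\,S^mA\otimes S^n\R)$ is the external product. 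The module axioms then follow formally from Theorem~\ref{thm:univprop1} together with the associativity and the external/internal compatibility of the Kasparov product: the unit axiom from $1_{S^mA}\otimes_\R 1_\R=1_{S^mA}$ and $\alpha(x\otimes 1)=x$; bilinearity from additivity of $\alpha$ and of the products; and the identity $z\cdot(y\cdot x)=(y\otimes_\R z)\cdot x$ by expanding the left-hand side with the product-compatibility of $\alpha$ and recognizing the resulting $KK$-class as $1_{S^mA}\otimes_\R(y\otimes_\R z)$. Naturality of the structure in $A$ is inherited from that of $\alpha$.

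For the ``in particular'' statements I would appeal to real Bott periodicity, $S^8\R=C_0(\R^8)\sim_{KK_\R}\R$ (equivalently, $K_8(\R)\cong\Z$ contains an invertible element $\beta$, whose inverse lies in $K_{-8}(\R)$), and to the $(1,1)$-periodicity $SS^{-1}\R\sim_{KK_\R}\R$; both are available in the real $KK$ setting (see \cite{kasparov81}, \cite{schroderbook}). Tensoring either equivalence with $A$ (external product, Proposition~\ref{kkproperties}) yields $S^8A\sim_{KK_\R}A$ and $S^{-1}SA\cong SS^{-1}\R\otimes A\sim_{KK_\R}A$, and then the observation of the first paragraph gives $F(S^8A)\cong F(A)$ --- which on the module $F_*(A)$ is multiplication by the invertible element $\beta$ --- and $F(S^{-1}SA)\cong F(A)$.

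The part I expect to be fussiest is purely bookkeeping: one must fix natural isomorphisms expressing that $S$, $S^{-1}$, and $-\otimes A$ mutually commute, that $S^mA\otimes S^n\R\cong S^{m+n}A$ for all signs of $m$ and $n$, and that $S^{-1}SA\cong SS^{-1}A$, and then thread these coherently through the associativity computation so that the product-compatibility of $\alpha$ matches the external-versus-internal Kasparov product identity. The other point requiring care is to cite (or re-derive) real Bott periodicity and the $(1,1)$-periodicity precisely in the real $KK$ framework of Section~2, since those are the only genuinely $K$-theoretic inputs beyond Theorem~\ref{thm:univprop1}.
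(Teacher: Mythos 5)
Your proposal is correct and follows essentially the same route as the paper: the module action is defined by pairing $x$ with the external product of $1_A$ (or $1_{S^mA}$) against elements of $K_*(\R)\cong KK_*(\R,\R)$ via the pairing of Theorem~\ref{thm:univprop1}, and the periodicity statements follow from the $KK$-equivalences $S^8\R\sim\R$ and $S^{-1}S\R\sim\R$ together with the fact that the pairing sends invertible $KK$-elements to isomorphisms. Your write-up is merely more explicit about the bookkeeping (suspension identifications, module axioms) that the paper leaves implicit.
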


\begin{proof}
For all separable $A$ and $\sigma$-unital $B$, the pairing of Proposition~\ref{kkproperties} gives $KK_*(A,B)$ the structure of a module over $KK_*(\R, \R)$.  Taking $A = B$, we define a graded ring homomorphism $\beta$ from $K_*(\R) \cong KK_*(\R, \R)$ to $KK_*(A,A)$ by multiplication by $1_A \in KK(A,A)$.

Then for any $x \in F_m(A)$ and $y \in K_n(\R)$ we define 
$x \cdot y = \alpha(x \otimes \beta(y)) \in F_{n+m}(A)$.
The second statement follows from the $KK$-equivalence between $\R$ and $S^8 \R$, and that between $\R$ and $S^{-1} S \R$ from Section~1.4 of \cite{boersema02}. 
\end{proof}

For all integers $n$ and $m$ there is a $KK$-equivalence between $S^n S^m A$ and $S^{n+m} A$, so it follows that the pairing Theorem~\ref{thm:univprop1} extends to a well-defined graded pairing 
$$\alpha \colon F_*(A) \otimes KK_*(A,B) \rightarrow F_*(B) \; .$$

Let {\catkk} be the category whose objects are separable real C*-algebras and the set of morphisms from $A$ to $B$ is $KK(A , B )$.  There is a canonical functor $KK$ from {\catr} to {\catkk} that takes an object $A$ to itself and which takes a C*-homomorphism $f \colon A \rightarrow B$ to the corresponding element $[f] \in KK(A,B)$.    

\begin{cor}\label{thm:univprop2}
Let $F$ be a functor from {\catr} to {\catabgp} that is homotopy invariant, stable, and split exact. 
Then there exists a unique functor $\ftn{ \hat{F} }{ \catkk }{ \textbf{A} }$ such that $\hat{F} \circ KK = F$.
\end{cor}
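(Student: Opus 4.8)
The plan is to extract $\hat F$ directly from the natural pairing $\alpha$ of Theorem~\ref{thm:univprop1}. On objects there is no choice: since $KK$ is the identity on objects, the requirement $\hat F \circ KK = F$ forces $\hat F(A) = F(A)$. On a morphism $\Phi \in KK(A,B)$ of $\catkk$ I would define $\hat F(\Phi) \colon F(A) \to F(B)$ by $\hat F(\Phi)(x) = \alpha(x \otimes \Phi)$; because $\alpha$ is biadditive, $x \mapsto \alpha(x \otimes \Phi)$ is a homomorphism of abelian groups, so $\hat F(\Phi)$ is a morphism in $\catabgp$.

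Next I would verify the functor axioms. The identity morphism of $A$ in $\catkk$ is $1_A \in KK(A,A)$, and the normalization clause of Theorem~\ref{thm:univprop1} gives $\hat F(1_A)(x) = \alpha(x \otimes 1_A) = x$, so $\hat F(1_A) = \id_{F(A)}$. For composition, recall that the composite in $\catkk$ of $\Phi \in KK(A,B)$ and $\Psi \in KK(B,C)$ is $\Psi \circ \Phi = \Phi \otimes_B \Psi$; the compatibility of $\alpha$ with the intersection product in Theorem~\ref{thm:univprop1} then yields
$$\hat F(\Psi \circ \Phi)(x) = \alpha\bigl(x \otimes (\Phi \otimes_B \Psi)\bigr) = \alpha\bigl(\alpha(x \otimes \Phi) \otimes \Psi\bigr) = \hat F(\Psi)\bigl(\hat F(\Phi)(x)\bigr),$$
so $\hat F$ is a functor.

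To see that $\hat F \circ KK = F$ on morphisms, let $f \colon A \to B$ be a C*-homomorphism. Writing $f_* \colon KK(A,A) \to KK(A,B)$ for the map induced on the (covariant) second variable, one has $[f] = f_*(1_A)$, and naturality of $\alpha$ gives $\hat F([f])(x) = \alpha(x \otimes f_*(1_A)) = f_*\bigl(\alpha(x \otimes 1_A)\bigr) = F(f)(x)$. For uniqueness, suppose $G \colon \catkk \to \catabgp$ also satisfies $G \circ KK = F$. Then $G$ and $\hat F$ agree on objects, and setting $\alpha'(x \otimes \Phi) := G(\Phi)(x)$ defines a pairing $F(A) \otimes KK(A,B) \to F(B)$ with $\alpha'(x \otimes 1_A) = G(1_A)(x) = x$, since $G(1_A) = G(KK(\id_A)) = F(\id_A) = \id_{F(A)}$; functoriality of $G$ together with the description of composition in $\catkk$ shows $\alpha'$ is natural. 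The uniqueness clause of Theorem~\ref{thm:univprop1} then forces $\alpha' = \alpha$, hence $G = \hat F$.

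The one point demanding genuine care is the naturality of the pairing $\alpha'$ associated to an arbitrary such $G$: not every morphism of $\catkk$ arises from a C*-homomorphism, so one cannot reduce naturality to the $KK$-images of maps of C*-algebras but must instead unwind the definition of composition in $\catkk$ and invoke functoriality of $G$ directly. Everything else is a formal consequence of the properties of $\alpha$ recorded in Theorem~\ref{thm:univprop1}.
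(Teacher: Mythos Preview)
Your proof is correct and follows essentially the same approach as the paper: define $\hat F$ on objects by $\hat F(A)=F(A)$ and on morphisms by $\hat F(\Phi)(x)=\alpha(x\otimes\Phi)$, then check $\hat F\circ KK=F$ via naturality of $\alpha$ and the identity $[f]=f_*(1_A)$. You are in fact more thorough than the paper, which does not explicitly verify the functor axioms (identity and composition) or spell out the uniqueness argument; your use of the associativity clause of Theorem~\ref{thm:univprop1} for composition and of the uniqueness clause for the uniqueness of $\hat F$ fills in exactly the steps the paper leaves implicit.
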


\begin{proof}
The functor $\hat{F}$ takes an object $A$ in {\catkk} to $F(A)$ in {\catabgp} and takes a morphism $y \in KK(A,B)$ to the homomoprhism $F(A) \rightarrow F(B)$ defined by $y \mapsto \alpha(x \otimes y)$.  The composition $\hat{F} \circ KK = F$ clearly holds on the level of objects.  On the level of morphisms we must verify the formula $\alpha(x \otimes [f]) = f_*(x)$ for $f \colon A \rightarrow B$ and $x \in F(A)$.  This formula follows by the naturality of the pairing $\alpha$, the formula $\alpha(x \otimes 1_A) = x$, and the formula $f_*(1_A) = [f] \in KK(A,B)$ which is verified as in Section~2.8 of \cite{higson87}.
\end{proof}

\begin{prop} \label{les1}
Let $F$ be a functor from {\catr} to {\catabgp} that is homotopy invariant and half exact.  Then for any short exact sequence 
$$0 \rightarrow A \xrightarrow{f} B \xrightarrow{g} C \rightarrow 0$$ 
there is a natural boundary map $\partial \colon F(SC) \rightarrow F(A)$ that fits into a (half-infinite) long exact sequence
$$\dots \rightarrow F(SB) \xrightarrow{g_*} F(SC) \xrightarrow{\partial} 
	 F(A) \xrightarrow{f_*} F(B) \xrightarrow{g_*} F(C) \; .$$
\end{prop}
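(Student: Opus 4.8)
The plan is to run the mapping-cone and Puppe-sequence argument of \cite{higson87} and \cite{blackadarbook}, observing that it uses only formal features of C*-algebras (mapping cones, cones $C_0((0,1],D)$, suspensions $SD=C_0(\R,D)$, and continuous paths of $*$-homomorphisms), all of which are available verbatim in the real category; no complexification or Bott periodicity is needed. First I would record two easy consequences of the hypotheses. Applying half exactness to the short exact sequence $0 \to A \xrightarrow{\id} A \to 0 \to 0$ shows that the zero homomorphism induces the zero map $F(A)\to F(0)$ for every $A$; taking $A=0$ (where the zero homomorphism $0\to 0$ is the identity) gives $F(0)=0$. Together with homotopy invariance this forces $F(D)=0$ whenever $D$ is contractible, in particular when $D$ is a cone $C_0((0,1],E)$ or a suspension thereof.

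Next I would establish the mapping-cone exact sequence. For a $*$-homomorphism $\varphi\colon A\to B$, let $Z_\varphi$ be the mapping cylinder and $C_\varphi=\ker(Z_\varphi\xrightarrow{\mathrm{ev}_1}B)$ the mapping cone, with canonical surjection $e_\varphi\colon C_\varphi\to A$. Since $Z_\varphi$ deformation retracts onto $A$, and $\mathrm{ev}_1$ corresponds under this retraction to $\varphi$, homotopy invariance identifies $F(Z_\varphi)$ with $F(A)$, and half exactness applied to $0\to C_\varphi\to Z_\varphi\to B\to 0$ gives that
\[ F(C_\varphi)\xrightarrow{(e_\varphi)_*}F(A)\xrightarrow{\varphi_*}F(B) \]
is exact. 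I would then invoke the Puppe identification: there is a natural homotopy equivalence $C_{e_\varphi}\simeq SB$ under which the canonical map $C_{e_\varphi}\to C_\varphi$ becomes the inclusion coming from the extension $0\to SB\to C_\varphi\xrightarrow{e_\varphi}A\to 0$. Iterating the displayed exactness along $\cdots\to C_{e_\varphi}\to C_\varphi\xrightarrow{e_\varphi}A\xrightarrow{\varphi}B$, applying $S$ repeatedly (using $SC_\varphi\cong C_{S\varphi}$), and splicing, yields a long exact sequence, infinite to the left,
\[ \cdots\to F(S^2B)\to F(SC_\varphi)\to F(SA)\xrightarrow{(S\varphi)_*}F(SB)\to F(C_\varphi)\xrightarrow{(e_\varphi)_*}F(A)\xrightarrow{\varphi_*}F(B), \]
valid for every $*$-homomorphism $\varphi$.

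To pass to a short exact sequence $0\to A\xrightarrow{f}B\xrightarrow{g}C\to 0$, the remaining ingredient is that the natural inclusion $\iota\colon A\to C_g$, $a\mapsto(a,0)$, induces an isomorphism $\iota_*\colon F(A)\to F(C_g)$. Indeed $\iota(A)$ is an ideal of $C_g$ whose quotient is the contractible algebra $C_0([0,1),C)$, so the extension $0\to A\to C_g\to C_0([0,1),C)\to 0$ and the vanishing of $F$ on the quotient make $\iota_*$ surjective; and applying the mapping-cone exact sequence to $\iota$ itself gives $\ker\iota_*=\operatorname{im}\bigl(F(C_\iota)\to F(A)\bigr)$, where $C_\iota$ sits in a short exact sequence with contractible kernel (a cone over $A$) and contractible quotient (a suspension of $C_0([0,1),C)$), whence $F(C_\iota)=0$ and $\iota_*$ is injective. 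Substituting $F(C_g)\cong F(A)$ into the Puppe sequence of $g$, and using $e_g\circ\iota=f$ to see that the induced map $F(A)\to F(B)$ is $f_*$, one obtains the boundary map $\partial\colon F(SC)\to F(A)$ as the composite $F(SC)\to F(C_g)\xrightarrow{\iota_*^{-1}}F(A)$ together with the exact sequence
\[ \cdots\to F(SB)\xrightarrow{(Sg)_*}F(SC)\xrightarrow{\partial}F(A)\xrightarrow{f_*}F(B)\xrightarrow{g_*}F(C). \]
Naturality of $\partial$ in the short exact sequence is automatic, since $C_g$, $\iota$, and the Puppe connecting maps are all functorial.

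I expect the main obstacle to be the Puppe identification $C_{e_\varphi}\simeq SB$, together with the bookkeeping needed to see that the iterated three-term exact sequences splice into a single genuine long exact sequence (with the maps identified up to the usual signs), rather than merely a family of short exact fragments. This is precisely the technical content of \cite{higson87} and \cite{blackadarbook}, and since every homotopy and every short exact sequence used there is insensitive to the ground field, the arguments carry over to real C*-algebras without change.
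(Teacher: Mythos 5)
Your argument is correct and is exactly the approach the paper takes: its entire proof is the citation ``Use the mapping cone construction as in Section~21.4 of \cite{blackadarbook},'' and your proposal simply carries out that construction (vanishing on contractible algebras, the cylinder/cone three-term exactness, the Puppe identification, and the excision isomorphism $F(A)\cong F(C_g)$) while correctly noting that every step is insensitive to the ground field. No gaps.
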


\begin{proof}
Use the mapping cone construction as in Section~21.4 of \cite{blackadarbook}.
\end{proof}

\begin{cor} \label{splitexact}
A functor $F$ from {\catr} to {\catabgp} that is homotopy invariant and half exact is also split exact.
\end{cor}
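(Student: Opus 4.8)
The plan is to derive split exactness directly from the long exact sequence of Proposition~\ref{les1}, using the observation that a section of $g$ induces a section of $g_*$ and then showing that the connecting map is forced to vanish.

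Let $0 \to A \xrightarrow{f} B \xrightarrow{g} C \to 0$ be a split exact sequence of separable real C*-algebras, with section $s \colon C \to B$, so that $g \circ s = \id_C$. Since $F$ is a functor, $g_* \circ s_* = \id_{F(C)}$; thus $g_* \colon F(B) \to F(C)$ is surjective and $s_* \colon F(C) \to F(B)$ is a right inverse for it. Applying the suspension functor $S = C_0(\R, -)$, which is exact and carries the section $s$ to a section $Ss$ of $Sg$, the same reasoning gives that $(Sg)_* \colon F(SB) \to F(SC)$ is surjective.

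Now I would invoke Proposition~\ref{les1} for the sequence $0 \to A \to B \to C \to 0$, whose associated long exact sequence reads
$$\dots \to F(SB) \xrightarrow{(Sg)_*} F(SC) \xrightarrow{\partial} F(A) \xrightarrow{f_*} F(B) \xrightarrow{g_*} F(C) \; .$$
Exactness at $F(SC)$, together with the surjectivity of $(Sg)_*$ established in the previous paragraph, forces $\partial = 0$. Exactness at $F(A)$ then shows that $f_*$ is injective. Combining this with exactness at $F(B)$ (which gives $\operatorname{im}(f_*) = \ker(g_*)$) and the surjectivity of $g_*$, we conclude that
$$0 \to F(A) \xrightarrow{f_*} F(B) \xrightarrow{g_*} F(C) \to 0$$
is exact, and it is split by $s_*$. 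Hence $F$ is split exact.

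There is essentially no obstacle here; the only points requiring (minor) care are that the long exact sequence of Proposition~\ref{les1} genuinely includes the term $F(SB)$ mapping onto $\ker \partial$, and that the suspension functor preserves the splitting — both of which are immediate.
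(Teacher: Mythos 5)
Your proof is correct and follows essentially the same route as the paper: the paper likewise observes that the splitting makes $g_*$ (including at the suspended level $F(SB)\to F(SC)$) surjective, forces $\partial=0$ in the sequence of Proposition~\ref{les1}, and concludes that $f_*$ is injective. You have merely spelled out the details (the suspended section, exactness at $F(B)$, and the splitting by $s_*$) that the paper leaves implicit.
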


\begin{proof}
The splitting implies that $g_*$ is surjective.  Thus in the sequence of Proposition~\ref{les1}, $\partial = 0$ and $f_*$ is injective.
\end{proof}

\begin{prop}
Let $F$ be a functor from {\catr} to {\catabgp} that is homotopy invariant, stable,  and half exact.  Then for any short exact sequence 
$$0 \rightarrow A \xrightarrow{f} B \xrightarrow{g} C \rightarrow 0$$ 
there is a natural long exact sequence (with 24 distinct terms)
$$\dots \rightarrow F_{n+1}(C) \xrightarrow{\partial} F_n(A) \xrightarrow{f_*} F_n(B) \xrightarrow{g_*} F_n(C) \xrightarrow{\partial} F_{n-1}(A) \rightarrow \dots \; .$$
\end{prop}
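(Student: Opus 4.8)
**

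The plan is to derive the 24-term periodic long exact sequence by splicing together copies of the 5-term exact sequence from Proposition~\ref{les1}, using the Bott-type periodicities furnished by Corollary~\ref{period}. First, I would note that by Corollary~\ref{splitexact} the functor $F$ is automatically split exact, so Corollary~\ref{period} applies: $F_*(A)$ is a graded module over $K_*(\R)$, and in particular $F(S^8 A) \cong F(A)$ and $F(S^{-1} S A) \cong F(A)$ naturally in $A$. These two isomorphisms together give natural isomorphisms $F_{n+8}(A) \cong F_n(A)$ for all $n$ and, combining with the $S^{-1}S$ equivalence, identify the groups $F_n(A)$ for $n \in \Z$ as a sequence that is periodic of period $8$ but genuinely runs through all of $\Z$ (as opposed to the period-$2$ situation in the complex case). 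Concretely, the distinct groups among $\{F_n(A)\}_{n\in\Z}$, $\{F_n(B)\}$, $\{F_n(C)\}$ number $8 + 8 + 8 = 24$, which is where the ``24 distinct terms'' comes from.

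Next I would produce the boundary maps $\partial \colon F_{n+1}(C) \to F_n(A)$. Applying Proposition~\ref{les1} to the suspended short exact sequence
$$0 \rightarrow S^n A \xrightarrow{S^n f} S^n B \xrightarrow{S^n g} S^n C \rightarrow 0$$
(which is again a short exact sequence of separable real C*-algebras since $S^n(-)$ is exact) yields a natural five-term exact sequence
$$F(S^{n+1} B) \xrightarrow{g_*} F(S^{n+1} C) \xrightarrow{\partial} F(S^n A) \xrightarrow{f_*} F(S^n B) \xrightarrow{g_*} F(S^n C) \; ,$$
that is, $F_{n+1}(B) \to F_{n+1}(C) \xrightarrow{\partial} F_n(A) \to F_n(B) \to F_n(C)$. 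Here one must handle negative $n$ as well: the same argument applies to the sequence $0 \to S^{-1}A \to S^{-1}B \to S^{-1}C \to 0$ (which makes sense since $S^{-1}(-) = \{f \in C_0(\R, \C\otimes(-)) \mid f(-x) = \overline{f(x)}\}$ is an exact functor on {\catr}), and then iterated. Splicing these five-term sequences for consecutive values of $n$ — each overlapping the next in three terms — produces a doubly-infinite exact sequence
$$\cdots \to F_{n+1}(C) \xrightarrow{\partial} F_n(A) \xrightarrow{f_*} F_n(B) \xrightarrow{g_*} F_n(C) \xrightarrow{\partial} F_{n-1}(A) \to \cdots$$
whose exactness at every node is exactly the exactness asserted by (possibly two overlapping copies of) Proposition~\ref{les1}. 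Finally, invoking the period-$8$ identifications from Corollary~\ref{period}, this doubly-infinite sequence closes up into a cyclic exact sequence with $24$ distinct terms.

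The main obstacle I expect is bookkeeping around the negatively-indexed suspensions and the compatibility of the $S^{-1}S \simeq \id$ identification with the boundary maps — i.e.\ checking that after making the period-$8$ identifications the maps in overlapping copies of the five-term sequence genuinely agree (up to sign, which is immaterial for exactness), so that the spliced sequence is well-defined rather than merely exact in pieces. This amounts to verifying that $\partial$ is natural with respect to the suspension isomorphisms, which follows from the naturality of the mapping-cone boundary map in Proposition~\ref{les1} together with the naturality of the $KK$-equivalences $\R \simeq S^8\R$ and $\R \simeq S^{-1}S\R$ used in Corollary~\ref{period}; I would simply cite these naturality properties rather than re-prove them. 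A secondary routine point is that suspension commutes with the short exact sequence, so that the boundary map of the $n$-fold suspended extension is the $n$-fold suspension of the boundary map of the original extension, which lets one deduce the whole sequence from the $n = 0$ and $n = -1$ cases of Proposition~\ref{les1}.
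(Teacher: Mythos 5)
Your proposal is correct and follows the same route as the paper, whose proof is simply the one-line observation that Corollary~\ref{splitexact} makes $F$ split exact, hence periodic by Corollary~\ref{period}, so the half-infinite sequence of Proposition~\ref{les1} closes up into the $24$-term periodic sequence. Your write-up just makes explicit the splicing and the naturality/compatibility checks that the paper leaves implicit.
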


\begin{proof}
From Corollary~\ref{splitexact} and Corollary~\ref{period}, $F$ is periodic; so Proposition~\ref{les1} gives the long exact sequence. 
\end{proof}

We say that a homotopy invariant, stable, half-exact functor $F$ from {\catr} to the category {\catabgp} of abelian groups 
\begin{enumerate}
\item[(v)] satisfies the {\it dimension axiom} if there is an isomorphism $F_*(\R) \cong K_*(\R)$ as graded modules over $K_*(\R)$
\item[(vi)] is {\it continuous} if for any direct sequence of real C*-algebras $(A_n, \phi_n)$, the natural homomorphism
$$\lim_{n \to \infty} F_*(A_n) \rightarrow F_*(\lim_{n \to \infty} (A_n)) \; $$
is an isomorphism.
\end{enumerate}

\begin{thm}
Let $F$ be a functor from {\catr} to {\catabgp} that is homotopy invariant, stable, half exact and satisfies the dimension axiom.  Then there is a natural transformation $\beta \colon K_n(A) \rightarrow F_n(A)$.  If $F$ is also continuous, then $\beta$ is an isomorphism for all real C*-algebras in the smallest class of separable C*-algebras which contains $\R$ and is closed under $KK$-equivalence, countable inductive limits, and the two-out-of-three rule for exact sequences.
\end{thm}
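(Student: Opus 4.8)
The plan is to construct $\beta$ from the universal pairing of Theorem~\ref{thm:univprop1} and then run a bootstrap argument over the generating class. First, since $F$ is homotopy invariant and half exact it is split exact by Corollary~\ref{splitexact}, so Theorem~\ref{thm:univprop1} furnishes the natural graded pairing $\alpha \colon F_*(A) \otimes KK_*(A,B) \to F_*(B)$ with $\alpha(x \otimes 1_A) = x$ and with the Kasparov-product compatibility; the same remarks apply to $K$-theory, which is itself homotopy invariant, stable, and split exact, so both $K_*$ and $F_*$ factor through {\catkk} by Corollary~\ref{thm:univprop2}, and $K_n(A) \cong KK_n(\R, A)$ naturally. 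The dimension axiom supplies a graded $K_*(\R)$-module isomorphism $\phi \colon K_*(\R) \xrightarrow{\cong} F_*(\R)$. I would set $u = \phi(1) \in F_0(\R)$ and \emph{define} $\beta_A \colon K_n(A) \cong KK_n(\R, A) \to F_n(A)$ by $\beta_A(y) = \alpha(u \otimes y)$. That $\beta$ is a natural transformation --- in fact of functors on {\catkk} --- then follows from the associativity relation $\alpha(\alpha(u \otimes y) \otimes z) = \alpha(u \otimes (y \otimes_A z))$ of Theorem~\ref{thm:univprop1}.

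Next I would verify the base case $A = \R$. By the proof of Corollary~\ref{period}, the $K_*(\R)$-module structure on $F_*(\R)$ is $x \cdot y = \alpha(x \otimes y)$, since multiplication by $1_\R$ is the identity of $KK_*(\R,\R)$. As $\phi$ is $K_*(\R)$-linear, $\phi(y) = \phi(1 \cdot y) = \phi(1)\cdot y = \alpha(u \otimes y) = \beta_\R(y)$, so $\beta_\R = \phi$ is an isomorphism in every degree.

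Now assume $F$ is continuous, and let $\mathcal{C}'$ be the class of separable real C*-algebras $A$ for which $\beta_A$ is an isomorphism in every degree; it suffices to show $\R \in \mathcal{C}'$ (done) and that $\mathcal{C}'$ is closed under $KK$-equivalence, countable inductive limits, and the two-out-of-three rule, for then $\mathcal{C}'$ contains the smallest such class containing $\R$. Closure under $KK$-equivalence is immediate: a $KK$-equivalence $A \sim B$ induces isomorphisms $K_*(A) \cong K_*(B)$ and $F_*(A) \cong F_*(B)$ (both functors factoring through {\catkk}) and $\beta$ commutes with them since it is natural on {\catkk}. Closure under countable inductive limits uses the continuity hypothesis on $F$ together with continuity of $K$-theory: for $A = \varinjlim A_n$ with each $A_n \in \mathcal{C}'$, naturality of $\beta$ identifies $\beta_A$ with $\varinjlim \beta_{A_n}$, a direct limit of isomorphisms. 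For the two-out-of-three rule, a short exact sequence $0 \to A \to B \to C \to 0$ produces natural $24$-term long exact sequences for $K_*$ and for $F_*$ --- both coming from the Proposition preceding this theorem, applied to $K$ and to $F$ respectively --- and $\beta$ defines a ladder between them; the five lemma then yields the conclusion once the ladder is known to commute.

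The main obstacle is checking that the ladder commutes at the boundary maps, i.e.\ that $\beta \circ \partial = \partial \circ \beta$. I would reduce this to naturality as follows: by the construction behind Proposition~\ref{les1}, $\partial \colon F(SC) \to F(A)$ factors as $(\iota_*)^{-1}\circ j_*$, where $\iota \colon A \to C_g$ and $j \colon SC \to C_g$ are the canonical C*-homomorphisms into the mapping cone of $g$ and $\iota_*$ is an isomorphism for any homotopy invariant, half exact functor; since $\beta$ is natural with respect to the C*-homomorphisms $\iota$ and $j$ it commutes with $j_*$ and with $\iota_*$, hence with $(\iota_*)^{-1}$, hence with $\partial$. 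Compatibility of $\beta$ with $f_*$ and $g_*$ is just naturality, so the ladder commutes and the five lemma applies term by term along the (periodic) sequences, completing the argument.
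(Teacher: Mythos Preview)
Your proof is correct and follows essentially the same approach as the paper: define $\beta$ by pairing a chosen generator of $F_0(\R)$ against $KK_*(\R,A)$ via Theorem~\ref{thm:univprop1}, verify the base case $A=\R$ using the $K_*(\R)$-module structure, and then bootstrap. The paper compresses the entire bootstrap into the single phrase ``bootstrapping arguments,'' whereas you spell out closure under $KK$-equivalence, inductive limits, and the two-out-of-three rule (including the mapping-cone justification that $\beta$ commutes with $\partial$); this added detail is all sound and does not deviate from the paper's strategy.
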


\begin{proof}
Let $z$ be a generator of $F(\R) \cong \Z$ and for $x \in K_n(A) \cong KK(\R, S^nA)$ define 
a $K_*(\R)$-module homomorphism $\beta \colon K_*(A) \rightarrow F_*(A)$ by $\beta(x) = \alpha(z \otimes x)$.  Taking $A = \R$, Theorem~\ref{thm:univprop1} yields that $\beta(1_0) = z$ where $1_0$ is the unit of the ring $K_*(\R) = KK_*(\R, \R)$.  Therefore, $\beta$ is an isomorphism for $A = \R$.  Then bootstrapping arguments show that $\beta$ is an isomorphism for all real C*-algebras in the class described.
\end{proof}



From Section~2.1 of \cite{boersema04} we have distinguished elements
\begin{align*}
c \in KK_{0} ( \R, \C ), &\quad\quad r \in KK_{0} ( \C , \R ) \\
\varepsilon \in KK_{0} ( \R , T ), &\quad\quad \zeta \in KK_{0} ( T, \C ) \\
\psi\su \in KK_{0} ( \C , \C ), &\quad\quad \psi\st \in KK_{0} ( T, T ) \\
\gamma \in KK_{-1} ( \C , T ), &\quad\quad \tau \in KK_{1} ( T, \R) \; .
\end{align*}     
  
For any homotopy invariant, stable, split exact functor $F$ on {\catr}, define the united $F$-theory of a real C*-algebra $A$ to be
$$F\crt(A) = \{ F_*(A), F_*(\C \otimes A), F_*(T \otimes A) \} \; $$
together with the collection of natural homomorphisms \begin{align*}
c_n &\colon F_n(A) \rightarrow F_n(\C \otimes A) \\
r_n&\colon  F_n(\C \otimes A) \rightarrow F_n(A)  \\
\ve_n &\colon F_n(A) \rightarrow F_n(T \otimes A) \\
\zeta_n &\colon F_n(T \otimes A) \rightarrow F_n(\C \otimes A) \\
(\psi\su)_n &\colon F_n(\C \otimes A) \rightarrow F_n(\C \otimes A) \\
(\psi\st)_n &\colon F_n(T \otimes A) \rightarrow F_n(T \otimes A) \\
\gamma_n &\colon F_n(\C \otimes A) \rightarrow F_{n-1}(\C \otimes A) \\
\tau_n &\colon F_n(T \otimes A) \rightarrow F_{n+1}(A) \\
\end{align*}
induced by the elements $c,r, \ve, \zeta, \psi\su, \psi\st, \gamma, \tau$ via the 
pairing of Theorem~\ref{thm:univprop1}.

\begin{prop}\label{thm:crtmod}
Let $F$ be a homotopy invariant, stable, split exact functor from {\catr} to {\catabgp} and let $A$ be a separable real C*-algebra.  Then $F\crt(A)$
is a \ct-module.  Moreover, if in addition $F$ is half exact, then $F\crt ( A )$ is acyclic.
\end{prop}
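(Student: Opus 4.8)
The plan is to treat the two assertions in turn. For the first, recall that a \ct-module consists of a triple of graded abelian groups — a graded $K_*(\R)$-module, a graded $K_*(\C)$-module, and a graded $K_*(T)$-module — equipped with the eight natural operations $c,r,\ve,\zeta,\psi\su,\psi\st,\gamma,\tau$ and subject to a fixed finite list of relations, each of which asserts either that two composites of operations agree or that some composite equals multiplication by a prescribed element of one of the three coefficient rings (see \cite{bousfield90}, \cite{boersema02}). The operations on $F\crt(A)$ are automatically natural in $A$, since each is induced via the pairing $\alpha$ of Theorem~\ref{thm:univprop1} by a fixed element of a $KK$-group and $\alpha$ is natural; the module structures over $K_*(\R)$, $K_*(\C)\cong KK_*(\C,\C)$, and $K_*(T)\cong KK_*(T,T)$ are obtained from Corollary~\ref{period} together with the homomorphisms $x\mapsto x\otimes 1_A$ from $KK_*(\C,\C)$ and $KK_*(T,T)$ into $KK_*(\C\otimes A,\C\otimes A)$ and $KK_*(T\otimes A,T\otimes A)$ and the pairing $\alpha$. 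So the only substantive task for the first assertion is to verify the relations.

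Here the key point is that $\alpha$ respects the Kasparov product, by the last formula of Theorem~\ref{thm:univprop1}, and is $K_*(\R)$-linear, by Corollary~\ref{period}. Consequently any relation holding among the distinguished elements $c,r,\ve,\zeta,\psi\su,\psi\st,\gamma,\tau$ inside the $KK$-category, or expressing one of their composites as a scalar multiple, transports verbatim to the corresponding relation among the operations on $F\crt(A)$: if $y\otimes z = w$ is such a $KK$-identity then $\alpha(\alpha(x\otimes y)\otimes z)=\alpha(x\otimes(y\otimes z))=\alpha(x\otimes w)$ for every $x$. Since the distinguished $KK$-elements do satisfy exactly the relations defining a \ct-module — this is the content of the fact, established in \cite{boersema02} and \cite{boersema04}, that united $K$-theory $K\crt$ is a \ct-module — it follows that $F\crt(A)$ is a \ct-module.

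For the second assertion, assume $F$ is in addition half exact; we must show that the sequences demanded in the definition of an acyclic \ct-module are exact for $F\crt(A)$. Each such sequence arises by applying united $K$-theory to one of a small number of short exact sequences of real C*-algebras relating $\R$, $\C$, and $T$ and their suspensions — for example the mapping-cone sequence realizing the cofiber of $\eta\in K_1(\R)$, whose complexification accounts for $\C$, and the analogous extensions producing $\ve,\zeta,\gamma,\tau$; these are recorded in \cite{boersema02} and \cite{boersema04}. Given such a short exact sequence $0\to I\to E\to Q\to 0$, tensoring with $A$ and feeding it into the periodic long exact sequence for the homotopy invariant, stable, half exact functor $F$ (which follows from Proposition~\ref{les1} via Corollaries~\ref{splitexact} and~\ref{period}) yields a long exact sequence in $F_*$. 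One then identifies the maps in this sequence with the operations of $F\crt(A)$: maps induced by C*-homomorphisms are handled by Corollary~\ref{thm:univprop2}, which gives $\alpha(x\otimes[f])=f_*(x)$, while each boundary map is matched with the appropriate operation by combining the naturality of the boundary map in Proposition~\ref{les1} with the fact that the distinguished elements $\gamma$, $\tau$, and the scalar $\eta$, were constructed in \cite{boersema04} precisely as the $KK$-theoretic boundary maps of these same extensions. With these identifications the long exact sequences for $F$ become exactly the acyclicity sequences of $F\crt(A)$.

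I expect the main obstacle to be the bookkeeping in that final identification: one must select the correct C*-algebra extensions, keep careful track of the suspension and Bott-periodicity isomorphisms (and attendant signs) needed to pass to the $\Z$-graded periodic long exact sequence, and in particular match the a priori abstract connecting homomorphisms with the specific operations $\gamma$ and $\tau$ and with multiplication by $\eta$, an element of a coefficient ring rather than one of the eight operations. Once the $KK$-level relations among the distinguished elements and the extensions realizing them are taken from \cite{boersema04}, the matching is routine, since $F$ inherits through $\alpha$ and the naturality of the long exact sequence exactly the structure that $K$-theory carries.
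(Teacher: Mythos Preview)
Your proposal is correct and takes essentially the same approach as the paper: the \ct-module relations are transported from the $KK$-level identities of \cite{boersema04} via the associativity of the pairing $\alpha$, and acyclicity is obtained by feeding the three short exact sequences of \cite{boersema02} (tensored with $A$) into the periodic long exact sequence for $F$. The only minor difference is in how the boundary maps are identified: rather than tracing the original construction of $\gamma$, $\tau$, $\eta\so$ as boundary classes, the paper first observes that the connecting homomorphism in the long exact sequence for any such $F$ is multiplication by some fixed $KK$-element $\partial$, and then determines $\partial$ (as $r\beta\su^{-1}$, $\tau\beta\st^{-1}$, etc.) by specializing to the already-computed case $F=KK(B,-)$ from \cite{boersema04}.
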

        
\begin{proof}
To show that $F\crt(A)$ is a \ct-module, 
we must show that the \ct -module relations 
\begin{align*}  
rc &= 2    & \psi\su \beta\su &= -\beta\su \psi\su & \xi &= r \beta\su^2 c \\
cr &= 1 + \psi\su & \psi\st \beta\st &= \beta\st \psi\st 
	&  \omega &= \beta\st \gamma \zeta \\ 
r &= \tau \gamma & \varepsilon \beta\so &= \beta\st^2 \varepsilon 
	& \beta\st \varepsilon \tau 
			&= \varepsilon \tau \beta\st + \eta\st \beta\st    \\
c &= \zeta \varepsilon & \zeta \beta\st &= \beta\su^2 \zeta
	 &  \varepsilon r \zeta &= 1 + \psi\st   \\
(\psi\su)^2 &= 1 & \gamma \beta\su^2 &= \beta\st \gamma       
	&  \gamma c \tau &= 1 - \psi\st \\
(\psi\st)^2 &= 1 & \tau \beta\st^2 &= \beta\so \tau & \tau &= -\tau \psi\st  \\
\psi\st \varepsilon &= \varepsilon & 
	\gamma &= \gamma \psi\su 
	\qquad & \tau \beta\st \varepsilon &= 0 \\
\zeta \gamma &= 0 & \eta\so &= \tau \varepsilon 
	& \varepsilon \xi &= 2 \beta\st \varepsilon \\
\zeta &= \psi\su \zeta & \eta\st &= \gamma \beta\su \zeta 
	& \xi \tau &= 2 \tau \beta\st \; 
\end{align*}
hold among the operations $\{c_n, r_n, \ve_n, \zeta_n, (\psi\su)_n, (\psi\st)_n, \gamma_n, \tau_n\}$ on $F\crt(A)$.  But in the proof of Proposition~2.4 of \cite{boersema04}, it is shown that these relations hold at the level of $KK$-elements.  Therefore, using the associativity of the pairing of Theorem~\ref{thm:univprop1}, the same relations hold among the operations of $F\crt(A)$.

Suppose now that $F$ is also half-exact.  To show that $K\crt(A)$ is acyclic, 
we must show that the sequences 
\begin{equation} \label{CRTequation1}
\dots  \rightarrow F_n(A) \xrightarrow{\eta\so} F_{n+1}(A) \xrightarrow{c} 
F_{n+1}(\C \otimes A) \xrightarrow{r \beta\su^{-1}} F_{n-1}(A) \rightarrow \dots 
\end{equation}
\begin{equation} \label{CRTequation2}
\dots  \rightarrow F_n(A) \xrightarrow{\eta\so^2} F_{n+2}(A) \xrightarrow{\ve} 
F_{n+2}(T \otimes A) \xrightarrow{\tau \beta\st^{-1}} F_{n-1}(A) \rightarrow \dots 
\end{equation}
\begin{equation} \label{CRTequation3}
\dots  \rightarrow F_{n+1}(\C \otimes A) \xrightarrow{\gamma} F_{n}(T \otimes A) \xrightarrow{\zeta}
F_{n}(\C \otimes A) \xrightarrow{1 - \psi\su} F_{n}(\C \otimes A) \rightarrow \dots  \; 
\end{equation}
are exact.  These can be derived from the short exact sequences
$$0 \rightarrow S^{-1} \R \otimes A \rightarrow \R \otimes A \rightarrow \C \otimes A \rightarrow 0  
$$
$$0 \rightarrow S^{-2} \R \otimes A \rightarrow \R \otimes A \rightarrow T \otimes A \rightarrow 0  
$$
$$0 \rightarrow S \C \otimes A \rightarrow T \otimes A \rightarrow \C \otimes A \rightarrow 0  
$$
from Sections~1.2 and 1.4 of \cite{boersema02}.  

Indeed, focusing on the first one for our argument, the short exact sequence
$$0 \rightarrow S^{-1} \R \otimes A \xrightarrow{f} \R \otimes A \xrightarrow{g} \C \otimes A \rightarrow 0  
$$
gives rise to the long exact sequence
$$\dots  \rightarrow F_n(A) \xrightarrow{[f]} F_{n+2}(A) \xrightarrow{[g]} 
F_{n+2}(T \otimes A) \xrightarrow{\partial} F_{n-1}(A) \rightarrow \dots $$
where the homomorphisms are given by the multiplication of the elements $[f] \in KK_2(\R, \R)$, $[g] \in KK (\R, \C)$, and $\partial \in KK_{-1}(\C, \R)$.
In the case of the functor $KK(B,-)$, it was shown in the proof of Proposition~2.4 of \cite{boersema04} that the resulting sequence has the form
$$ \dots \rightarrow KK_{n} ( B , A ) \xrightarrow{\eta\so} 
KK_{ n + 1} ( B , A ) \xrightarrow{c} 
KK_{n+1} ( B , \C \otimes A ) \xrightarrow{r \beta\su^{-1}}  
 \cdots   $$
for all separable real C*-algebras $B$.  It then follows easily that the $KK$-element equalities $[f] = \eta\so$, $[g] = c$, $\partial = r \beta\su^{-1}$ hold.  This proves that Sequence~\ref{CRTequation1} is exact.   Sequences~\ref{CRTequation2} and \ref{CRTequation3} are shown to be exact the same way.

\end{proof}

\begin{thm}
Let $F$ and $G$ be
homotopy invariant, stable, half exact functors from {\catr} to {\catabgp} with a natural transformation $\mu_A \colon F(A) \rightarrow G(A)$.  If $\mu_A$ is an isomorphism for all complex C*-algebras $A$ in {\catr}, then $\mu_A$ is an isomorphism for all real C*-algebras in {\catr}.
\end{thm}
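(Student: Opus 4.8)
The plan is to reduce the real case to the complex case through the acyclic \ct-module structure supplied by Proposition~\ref{thm:crtmod}. Since $F$ and $G$ are homotopy invariant, stable, and half exact, Corollary~\ref{splitexact} makes them split exact, so for every separable real C*-algebra $A$ the united theories $F\crt(A)$ and $G\crt(A)$ are defined and, by Proposition~\ref{thm:crtmod}, acyclic. The first step is to check that $\mu$ induces a morphism of \ct-modules $\mu\crt_A \colon F\crt(A) \to G\crt(A)$. Each of the structure operations $c_n,r_n,\ve_n,\zeta_n,(\psi\su)_n,(\psi\st)_n,\gamma_n,\tau_n$ is of the form $\alpha(-\otimes z)$ for a fixed $KK$-class $z$ (Theorem~\ref{thm:univprop1}); and because the construction of that pairing is built functorially out of $*$-homomorphisms and split-exact sequences (Definitions~3.3 and~3.4 of \cite{higson87}), all of which $\mu$ respects, one obtains $\mu\circ\alpha_F(-\otimes z) = \alpha_G(-\otimes z)\circ\mu$ for every $z$. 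Hence $\mu$ commutes with every \ct-operation, as well as with the connecting maps of the long exact sequences in Proposition~\ref{thm:crtmod}.

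Next, since $\C\otimes A$ is a separable \emph{complex} C*-algebra, the hypothesis gives that $\mu_{\C\otimes A}$ is an isomorphism; combined with the $8$-fold periodicity of $F$ and $G$ (Corollary~\ref{period}) and the naturality of $\mu$, it follows that $\mu$ is an isomorphism on $F_n(\C\otimes A)\to G_n(\C\otimes A)$ for every $n$. That is, $\mu\crt_A$ is an isomorphism on the complex part.

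It remains to propagate this to the real part. I would use the exact sequence~\eqref{CRTequation1} for $F$ together with its counterpart for $G$; the morphism $\mu\crt_A$ carries the first onto the second. A naive five-lemma does not apply, because two consecutive terms of~\eqref{CRTequation1} are real-part groups; the extra ingredient is that $(\eta\so)^3 = 0$ as an operation on $F\crt(A)$ and on $G\crt(A)$. Indeed $\eta\so$ is induced via the associative pairing by the element $\eta\so \in KK_1(\R,\R)$, and $\eta\so^3$ lies in $KK_3(\R,\R)=0$, so $(\eta\so)^3$ is induced by $0$ — the same style of reasoning by which the \ct-relations were verified in Proposition~\ref{thm:crtmod}. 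With $\mu$ an isomorphism on the complex terms and $(\eta\so)^3=0$ in hand, a diagram chase closes up: (i) if $x\in F_n(A)$ and $\mu(x)=0$, then using exactness of~\eqref{CRTequation1} and the complex-part isomorphism one writes $x=\eta\so(x')$ with $\mu(x')=0$, and iterating gives $x\in(\eta\so)^3\bigl(F_{n-3}(A)\bigr)=0$, so $\mu$ is injective on the real part in every degree; (ii) using this injectivity together with exactness and the complex-part isomorphism, one shows $G_n(A)=\operatorname{im}\mu+\eta\so\bigl(G_{n-1}(A)\bigr)$ for every $n$, and substituting this into itself twice, using $(\eta\so)^3=0$ and $\eta\so\circ\mu=\mu\circ\eta\so$, gives $G_n(A)=\operatorname{im}\mu$, so $\mu$ is also surjective on the real part. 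Thus $\mu\crt_A$ is an isomorphism on the real part; in particular, in degree $0$, $\mu_A\colon F(A)\to G(A)$ is an isomorphism. Since $A$ was an arbitrary separable real C*-algebra, this proves the theorem.

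I expect two points to need the most care. The first is the verification that $\mu$ is compatible with the $KK$-pairing; this is routine but must be spelled out in the spirit of Sections~2.4--2.8 of \cite{higson87} (and is already implicitly used in Proposition~\ref{thm:crtmod}). The second, and the genuine subtlety, is that formal acyclicity of the \ct-sequences does \emph{not} by itself recover the real part from the complex part — the real-part terms occur in pairs in~\eqref{CRTequation1} — so the nilpotence $(\eta\so)^3=0$ is exactly the extra input that makes the chase terminate. (Alternatively one could first use sequences~\eqref{CRTequation2} and~\eqref{CRTequation3} to deduce that $\mu\crt_A$ is an isomorphism on the $T$-part, but once $(\eta\so)^3=0$ is available that step is not needed.)
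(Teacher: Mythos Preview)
Your proof is correct and follows the same overall architecture as the paper's --- build $\mu\crt_A$, observe it is an isomorphism on the complex part, and deduce it is an isomorphism on the real part --- but where the paper simply invokes Section~2.3 of \cite{bousfield90} for that last implication, you supply a direct, self-contained argument.  Your chase using only sequence~\eqref{CRTequation1} together with the nilpotence $\eta\so^{\,3}=0$ (which indeed holds since $KK_3(\R,\R)\cong K_3(\R)=0$) is precisely the mechanism underlying Bousfield's general statement, and it has the pleasant feature that the $T$-part and sequences~\eqref{CRTequation2}--\eqref{CRTequation3} are never touched; only the real/complex sequence and the vanishing of $K_3(\R)$ enter.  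The paper's version is shorter but imports external machinery; yours is longer but keeps the entire argument inside the paper.

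One small point worth making explicit in your write-up of step~(i): the element $x'$ produced by exactness from $c(x)=0$ need not itself satisfy $\mu(x')=0$.  One first observes $\eta\so(\mu(x'))=\mu(x)=0$, writes $\mu(x')=r\beta\su^{-1}(y)$ by exactness in $G\crt(A)$, lifts $y=\mu(y')$ via the complex-part isomorphism, and replaces $x'$ by $x'-r\beta\su^{-1}(y')$; then $\mu$ of the new element vanishes and $\eta\so$ of it is still $x$ since $\eta\so\circ r\beta\su^{-1}=0$.  You clearly have this in mind, but it should be spelled out.
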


\begin{proof}
Let $A$ be a real separable C*-algebra.  The natural transformation $\mu_A$ induces a homomorphism
$\mu_A\crt \colon F\crt(A) \rightarrow G\crt(A)$ 
of acyclic \ct-modules which is, by hypothesis, an isomorphism on the complex part.  Then the results in Section~2.3 of \cite{bousfield90} imply that $\mu_A\crt$ is an isomorphism.
\end{proof}


\end{document}